\documentclass[11pt,reqno]{amsart}
\usepackage{amssymb,latexsym}
\usepackage{amsmath,color}
\usepackage{amsthm}
\usepackage{epsfig}
\usepackage{graphicx}
\usepackage{titletoc}
\usepackage{dsfont}
\usepackage{nccbbb}
\usepackage{amsmath}
\usepackage{amssymb}
\usepackage{amssymb}
\usepackage{amstext}
\usepackage{amsopn}
\usepackage{amsbsy}
\usepackage{amscd}
\usepackage{amsxtra}
\usepackage{accents}
\usepackage{bbm}
\usepackage{yfonts}
\usepackage{bbm}

\numberwithin{equation}{section}

\usepackage{xparse}
\usepackage{microtype}
\usepackage[margin=1cm]{geometry}

\usepackage{lipsum}

\ExplSyntaxOn
\box_new:N \l_hideparbox_box

\NewDocumentCommand{\hideparbox}{O{c}mm+m}
 {
  \group_begin:
  \vbox_set:Nn \l_hideparbox_box
   {
    \use:c { @parboxrestore }
    \hsize=#3\scan_stop:
    \strut#4\par
   }
  \vbadness=\c_ten_thousand 
  \vbox_set_split_to_ht:NNn \l_hideparbox_box \l_hideparbox_box { #2 }
  \parbox[#1][#2]{#3}
   {
    \vbox_unpack:N \l_hideparbox_box
   }
  \group_end:
 }
\ExplSyntaxOff

\usepackage[T1]{fontenc}
\topmargin=-0.5in \oddsidemargin=-0.66cm \evensidemargin=-0.66cm
\textheight232mm \textwidth168mm
\newtheorem{theorem}{Theorem}[section]

\newtheorem{lemma}[theorem]{Lemma}

\newtheorem{remark}[theorem]{Remark}

\theoremstyle{definition}

\newcommand{\eps}{\varepsilon}

\newcommand{\dsx}{\mathsf{d}\sigma_x}

\newcommand{\dx}{\,\mathrm{d}x}

\usepackage[misc]{ifsym}
\allowdisplaybreaks

\title[A non-existence result for a nonlinear Neumann  problem]{}

\address{\rm (Chiun-Chang Lee) Institute for Computational and Modeling Science, National Tsing Hua University, Hsinchu 30013, Taiwan}
\email{chlee@mail.nd.nthu.edu.tw}



\begin{document}

\maketitle
\vspace*{-66pt}
\begin{center}
{\Large\bf  A non-existence result for a nonlinear Neumann  problem }\vspace*{8pt}\\
 Chiun-Chang Lee
\end{center}
\begin{abstract} 
{\scriptsize In this note we consider a semilinear elliptic equation in $B_R$ with the nonlinear boundary condition, where $B_R$ is a ball of radius $R$. Under certain conditions, we  establish a sufficient condition on the non-existence of solutions provided that $R$ is sufficiently large. The main argument is based on applying the asymptotic analysis to the equation with respect to $R\gg1$. 
\vspace{3pt}
\\ 
\textbf{Keywords.} Elliptic equation, Nonlinear Neumann boundary condition, Non-existence, Radially symmetric solutions\\
\textbf{Mathematics Subject Classification.}  35J15, 35J25, 35J66}
\end{abstract}

\section{\bf Introduction} 
Let $R>0$ and $N\geq2$. We consider a semilinear elliptic equation
\begin{align}\label{equ}
- u''(r)-\frac{N-1}{r}u'(r)  +f(u(r))=0,\qquad\,r\in(0,R), 
\end{align}
with the nonlinear Neumann boundary condition
\begin{equation}\label{bdu}
    u'(0)=0,\quad\,u'(R)=g(u(R)).
\end{equation}
\eqref{equ}--\eqref{bdu} is the $N$-dimensional radial version of the nonlinear Neumann problem (cf. \cite{Lee2020,S1993}):
\begin{align}\label{ndu}
\begin{cases}
\displaystyle-\Delta{u}+f(u)=0&\quad\,\text{in}\,\,B_R,\\ 
\displaystyle\quad\frac{\partial{u}}{\partial\vec{\nu}}=g(u)&\quad\text{on}\,\,\partial{B_R},
\end{cases}
\end{align}
where $\Delta$ stands for the Laplacian operator, $B_R$ is a ball of radius $R$ centered at the origin in $\mathbb{R}^N$, and $\frac{\partial}{\partial\vec{\nu}}$ is the normal derivative with respect to the unit outward normal vector $\vec{\nu}$ to $\partial{B_R}$. We refer the reader to \cite{A1976,D2006} for the physical background of nonlinear boundary conditions and references therein.

The associated energy functional  of \eqref{ndu} is defined by
\begin{align*}
\mathcal{E}[u]=\int_{{B}_R}\left(\frac{|\nabla u|^2}{2}+F(u)\right)\dx-\int_{\partial{{B}_R}}\int_{0}^{u}g(t)\,\text{d}t\dsx,\,\,u\in\mathrm{H}^1({B}_R),
\end{align*}
where $F$ is a primitive of $f$:
\begin{align}\label{bigF}
  F(t)=\int_0^tf(s)\,\text{d}s.  
\end{align}
Under assumptions with physical meanings that $f:\mathbb{R}\to\mathbb{R}$ is strictly increasing and $g:\mathbb{R}\to\mathbb{R}$ is monotonically decreasing and non-negative, the author in his previous work~\cite{Lee2020} applied the standard direct method to $\mathcal{E}$ and established the existence of weak solutions to \eqref{ndu}.  Then, following the standard argument consisting of the maximum principle and the elliptic regularity theorem (cf. \cite{GT1983}),  (\ref{ndu}) he obtained the uniqueness of solutions to \eqref{ndu}. Furthermore, by the uniqueness this solution is radially symmetric in $\overline{B_R}$, and satisfies \eqref{equ}--\eqref{bdu}.

 To the best of our knowledge, however, when $g$ is not necessary a decreasing function, the issue about the existence result of equation~\eqref{ndu} and its radial version~\eqref{equ}--\eqref{bdu} remains to be open. Based on \cite{Lee2020}, in this note we shall focus on the radial version~\eqref{equ}--\eqref{bdu} and assume that
\begin{align}\label{as-f}
f\in\text{C}(\mathbb{R};\mathbb{R})\,\,\text{is\,\,strictly\,\,increasing},\,\,f(0)=0\,\,\text{and}\,\,\liminf_{t\to0}\frac{f(t)}{t}>0.   
\end{align}

Moreover, by \eqref{as-f}, $F\in\text{C}^1(\mathbb{R};\mathbb{R})$ is strictly convex  and has the minimum value $F(0)=0$ in $\mathbb{R}$. We further make an assumption for $F$:  there exists $\theta_0>1$ such that
\begin{align}\label{f-F}
    tf(t)\geq\theta_0F(t)\quad\text{for}\quad\,|t|\gg1.
\end{align}
Note that, by \eqref{f-F}, there holds $\lim\limits_{|t|\to\infty}F(t)=\infty$. When $\theta_0>2$,
\eqref{f-F} particularly implies that $f$ and $F$ are superlinear at infinity. Such an assumption was introduced by Ambrosetti and Rabinowitz \cite{AR1973}. Besides, an application of \eqref{as-f}--\eqref{f-F} is $f(u)=\sinh{u}$ which appears in the so-called Poisson--Boltzmann equation~\cite{BS1984,L2016,S1993} and sinh--Gordon equation~\cite{JK1990}.

With these properties of $f$ and $F$, we propose a condition of $g$ for the non-existence of \eqref{equ}--\eqref{bdu} as $R>0$ is sufficiently large, which is stated as follows.
\begin{theorem}\label{m-thm}
Under \eqref{as-f}--\eqref{f-F}, if $g\in\mathrm{C}(\mathbb{R};\mathbb{R})$ satisfies 
\begin{align}\label{as-g}
 g^2(t)\neq2F(t),\,\,\forall\,t\in\mathbb{R},\,\,\text{and}\,\,\lim_{|t|\to\infty}\frac{g^2(t)}{2F(t)} \neq1,  
\end{align}
then there exists $R^*=R^*(f,g)>0$ depending on $f$ and $g$ such that when $R>R^*$, equation~\eqref{equ}--\eqref{bdu} has no solution.
\end{theorem}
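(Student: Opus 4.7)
The plan is to exploit the monotonicity of a Hamiltonian-style energy along \eqref{equ}. Set
\[
E(r) := \tfrac{1}{2}\bigl(u'(r)\bigr)^2 - F(u(r)), \qquad r \in (0, R].
\]
Multiplying \eqref{equ} by $u'(r)$ and using the ODE to replace $u''(r) - f(u(r))$ by $-\tfrac{N-1}{r}u'(r)$ gives
\[
E'(r) \;=\; u'(r)\bigl[u''(r) - f(u(r))\bigr] \;=\; -\tfrac{N-1}{r}\bigl(u'(r)\bigr)^2 \;\leq\; 0,
\]
so $E$ is non-increasing on $(0,R]$. The singularity at $r=0$ is harmless: since $u'(0)=0$, the ODE forces $u'(r) = O(r)$ near $r=0$, so $(u')^2/r$ is integrable and $E$ extends continuously with $E(0) = -F(u(0))$.

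Using the boundary conditions $u'(0)=0$, $u'(R)=g(u(R))$ together with $F \geq 0$ on $\mathbb{R}$ (which follows from $F(0)=0$, $F' = f$, and strict monotonicity of $f$ through its zero), $E(R)\leq E(0)$ becomes
\[
\tfrac{1}{2} g^2(u(R)) - F(u(R)) \;\leq\; -F(u(0)) \;\leq\; 0,
\]
whence $g^2(u(R)) \leq 2F(u(R))$. Next I would use the first clause of \eqref{as-g} to pin down the sign of $g^2 - 2F$: since $g^2(0) \neq 2F(0) = 0$ and $g^2(0) \geq 0$, we have $g^2(0) > 2F(0)$, and since $g^2 - 2F$ is continuous and never zero on $\mathbb{R}$, the intermediate value theorem upgrades this to $g^2(t) > 2F(t)$ for every $t \in \mathbb{R}$. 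Specializing at $t = u(R)$ contradicts the preceding inequality and rules out any solution.

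This reasoning in fact yields non-existence for \emph{every} $R > 0$, slightly stronger than the theorem claims, and does not invoke the second clause of \eqref{as-g} or the growth assumption \eqref{f-F}. These are the natural ingredients for the complementary asymptotic / boundary-layer picture hinted at in the abstract: rescaling $\rho=r/R$ turns \eqref{equ} into a singular perturbation on $[0,1]$, and a limiting boundary-layer profile $w:[0,\infty)\to\mathbb{R}$ with $w''=f(w)$, $w'(0)=-g(w(0))$, $w(\infty)=0$ has conserved energy $\tfrac12(w')^2=F(w)$, forcing the same forbidden identity $g^2(w(0))=2F(w(0))$. The main obstacle in the direct energy route is the modest regularity bookkeeping near $r=0$; in the asymptotic route it would instead be the rigorous extraction of a limiting profile from a sequence $R_n\to\infty$, which is where \eqref{f-F} and the energy monotonicity combine to provide uniform estimates.
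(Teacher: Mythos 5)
Your proof is correct, and it takes a genuinely different --- in fact sharper --- route than the paper. The paper rescales to the singularly perturbed problem \eqref{equ2}--\eqref{newU} with $\eps=1/R$, proves maximum-principle bounds and exponential boundary-layer estimates (Lemmas~\ref{lem1} and \ref{lem2}), and integrates the weighted identity \eqref{diu} to obtain
\[
\frac{g^2(\lambda_{\eps})}{2}-F(\lambda_{\eps})=-(2N-2)\int_0^1 x^{2N-3}F(U(x))\,\mathrm{d}x ,
\]
then shows the right-hand side is $o(1)$ (respectively $o(F(\lambda_{\eps}))$) as $\eps\downarrow0$, contradicting \eqref{as-g} in the limit; this is where \eqref{f-F} and the second clause of \eqref{as-g} enter, and it yields nonexistence only for $R$ large. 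Your Hamiltonian $E=\tfrac12(u')^2-F(u)$ is the same identity without the weight $x^{2N-2}$, and your key extra observation --- that the first clause of \eqref{as-g} alone, via $g^2(0)>0=2F(0)$ and the intermediate value theorem, forces $g^2>2F$ on all of $\mathbb{R}$ --- turns the resulting one-sided bound $g^2(u(R))\le 2F(u(R))$ into an immediate contradiction for \emph{every} $R>0$, without \eqref{f-F} and without the limit condition in \eqref{as-g}. Notably, the same shortcut is available inside the paper's own framework: since $F\ge 0$ (which follows from \eqref{as-f}) and $N\ge 2$, identity \eqref{asc} already gives $g^2(\lambda_{\eps})\le 2F(\lambda_{\eps})$ for every $\eps$, so the asymptotic analysis of Lemma~\ref{lem2} is not needed once the sign of $g^2-2F$ is pinned down. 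The regularity issue you flag at $r=0$ is indeed harmless: $r^{N-1}u'(r)=\int_0^r s^{N-1}f(u(s))\,\mathrm{d}s$ gives $u'(r)=O(r)$, and in any case continuity of $E$ on $[0,R]$ together with $E'\le 0$ on $(0,R)$ already yields $E(R)\le E(0)$ by the mean value theorem.
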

An example of \eqref{as-g} is $f(t)=\sinh t$ and $g(t)=\pm\left(1+4\sinh\frac{|t|}{2}\right)$.  In Section~\ref{m-sec}, we will state the proof of Theorem~\ref{m-thm}.
\begin{remark}
It should be stressed that $g$ satisfying \eqref{as-g} is not a decreasing function. Note that by \eqref{bigF} and \eqref{as-g} we have $g(0)\neq0$.  Without loss of generality, we may assume $g(0)<0=F(0)$. Suppose on the contrary that $g$ is decreasing on $\mathbb{R}$. Then $g^2$ is increasing on $(t_-,0]$, where $t_-$ is finite such that $g(t_-)=0$ or $t_-=-\infty$ if $g<0$ on $(-\infty,0)$. Since $2F$ is  strictly decreasing on $(-\infty,0)$ with $\lim\limits_{t\to-\infty}F(t)=\infty$ and $g^2(0)>2F(0)$, by the immediate value theorem there exists $t_0\in(t_-,0)$ such that $g^2(t_0)=2F(t_0)$. This contradicts to \eqref{as-g}. 
\end{remark}
\begin{remark}
As an application of Theorem~\ref{m-thm}, we shall point out that if $g$ satisfies one of the following statements (i) and (ii), then as $R>0$ is sufficiently large, equation~\eqref{equ}--\eqref{bdu} has no solution:
\begin{itemize}
    \item[(i)] $g(0)>0$ and $\inf\limits_{t\in\mathbb{R}\setminus\{0\}}\frac{g(t)}{\sqrt{2F(t)}}>1$; 
        \item[(ii)] $g(0)<0$ and $\sup\limits_{t\in\mathbb{R}\setminus\{0\}}\frac{g(t)}{\sqrt{2F(t)}}<-1$.
\end{itemize}
\end{remark}

Theorem~\ref{m-thm} also shows that under \eqref{as-f}--\eqref{f-F}, if $g\in\mathrm{C}(\mathbb{R};\mathbb{R})$ satisfies \eqref{as-g}, then as $R>R^*$,  equation~\eqref{ndu} has no radially symmetric solution. Accordingly, we shall state a problem which, to the best of our knowledge, is unsolved.\vspace{6pt}\\
{$\blacksquare$~\bf Open problem.}  Assume that $f$ and $F$ satisfy \eqref{as-f}--\eqref{f-F} and $g$ satisfies \eqref{as-g}. Does there exist $R_*>0$ such that for each $R>R_*$, equation~\eqref{ndu} has a solution which is not radially symmetric in $\overline{B_R}$?

\section{\bf Proof of Theorem~\ref{m-thm}}\label{m-sec}
We first consider a change of variables
\begin{align}\label{ch-v}
U(x)=u(r)\,\,\text{with}\,\,  x=\frac{r}{R}\,\,\text{and}\,\,\eps=\frac{1}{R}>0.  
\end{align}
Then \eqref{equ}--\eqref{bdu}  is equivalent to the equation
\begin{align}
   -\eps^2&\left(U''(x)+\frac{N-1}{x}U'(x)\right)+f(U(x))=0,\quad\,x\in(0,1),\label{equ2}\\
  &U'(0)=0,\quad \eps{U'(1)}=g(U(1)).\label{bdu2}
\end{align}
Moreover, from \eqref{bdu2} let us set
\begin{align}\label{newU}
    U(1)=\lambda_{\eps},\quad\eps{U'(1)}=g(\lambda_{\eps}).
\end{align}
Although the solution $U$ depends on the parameter $\eps$ and should be denoted by $U_{\eps}$, without the confusion we omit its subscript for a sake of simplicity.

Equation~\eqref{equ2} with the boundary condition \eqref{newU} is not an overdetermined problem since $\lambda_{\eps}$ will be determined. Note also that assumption~\eqref{as-f} implies $f(U(x))=C(x)U(x)$ for some function $C(x)>0$. Thus, for each $\eps>0$ and $\lambda_{\eps}\in\mathbb{R}$, equation~\eqref{equ2} with the boundary conditions~$(U'(0),U(1))=(0,\lambda_{\eps})$ satisfies the maximum principle and has a unique classical solution~(see, e.g., \cite{C1996} and \cite[Section~2]{Lee2020}). To be specific we shall study the asymptotics (with respect to $\eps\downarrow0$) of solutions to equation~\eqref{equ2} with boundary conditions~$(U'(0),U(1))=(0,\lambda_{\eps})$. In doing so, it is expected to obtain the refined asymptotics of $U'(1)$ so that we can further investigate $\lambda_{\eps}$ via $\eps{U'(1)}=g(\lambda_{\eps})$ with respect to $\eps\downarrow0$.

 \begin{lemma}\label{lem1} Let $U$ be the unique classical solution of \eqref{equ2} with the boundary conditions~$(U'(0),U(1))=(0,\lambda_{\eps})$. Then, we have
\begin{align}\label{maxin}
    \min\{0,\lambda_{\eps}\}\leq\,U\leq \max\{0,\lambda_{\eps}\}\quad\text{and}\quad\lambda_{\eps}U'\geq0\quad\text{on}\,\,[0,1].
\end{align}
\end{lemma}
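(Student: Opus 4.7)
\textbf{Proof plan for Lemma~\ref{lem1}.} The plan is to establish the bounds by a maximum-principle argument and then derive monotonicity from the integrated form of \eqref{equ2}. First, I would dispose of the trivial case $\lambda_{\eps}=0$: then $U\equiv 0$ satisfies the boundary value problem (since $f(0)=0$), so by the uniqueness recalled just before the lemma we are done. Next, since $\tilde U:=-U$ solves an equation of the same form with $\tilde f(t):=-f(-t)$ (which still satisfies \eqref{as-f}) and boundary data $-\lambda_{\eps}$, I may assume without loss of generality that $\lambda_{\eps}>0$. So the whole task reduces to proving $0\leq U\leq\lambda_{\eps}$ and $U'\geq 0$ when $\lambda_{\eps}>0$.

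For the upper bound $U\leq\lambda_{\eps}$, suppose $U-\lambda_{\eps}$ attains a positive maximum at some $x_0\in[0,1)$ (the value at $x=1$ is zero). If $x_0\in(0,1)$, then $U'(x_0)=0$ and $U''(x_0)\leq 0$, so plugging into \eqref{equ2} gives $f(U(x_0))=\eps^2U''(x_0)\leq 0$; but $U(x_0)>\lambda_{\eps}>0$ together with \eqref{as-f} forces $f(U(x_0))>0$, a contradiction. If $x_0=0$, the boundary condition $U'(0)=0$ together with L'H\^opital's rule applied to \eqref{equ2} at $x=0$ yields $-\eps^2NU''(0)+f(U(0))=0$, whence $U''(0)=f(U(0))/(\eps^2N)>0$; this makes $x=0$ a strict local minimum, contradicting it being a maximum. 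The lower bound $U\geq 0$ is proved by the mirror argument: a negative minimum at an interior point would force $f(U(x_0))\geq 0$ with $U(x_0)<0$, and a negative minimum at $x=0$ would give $U''(0)<0$, both impossible.

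For the monotonicity, I would rewrite \eqref{equ2} in the divergence form
\begin{equation*}
\bigl(x^{N-1}U'(x)\bigr)'=\frac{x^{N-1}}{\eps^2}\,f(U(x)),\qquad x\in(0,1).
\end{equation*}
Using the bounds from the previous step we have $U(x)\geq 0$, hence $f(U(x))\geq 0$ by \eqref{as-f}. Integrating from $0$ to $x$ and using $U'(0)=0$ yields $x^{N-1}U'(x)\geq 0$ for all $x\in(0,1)$, which gives $U'\geq 0$ and therefore $\lambda_{\eps}U'\geq 0$.

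The only nontrivial point is the singular coefficient $\tfrac{N-1}{x}$ at $x=0$; the handling of a boundary extremum there requires the L'H\^opital manipulation above and the assumption that $U$ is a classical solution with $U'(0)=0$. Everything else is a routine application of the weak/strong maximum principle together with the sign condition $tf(t)\geq 0$ furnished by \eqref{as-f}.
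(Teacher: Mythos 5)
Your proof is correct and follows essentially the same route as the paper: a maximum-principle argument using the sign of $f$ (via \eqref{as-f}) to get $0\leq U\leq\lambda_{\eps}$, followed by integration of the divergence form $(x^{N-1}U')'=\eps^{-2}x^{N-1}f(U)\geq0$ to get $U'\geq0$. The only cosmetic difference is at the singular point $x=0$, where you evaluate the equation via L'H\^opital to get $U''(0)=f(U(0))/(\eps^2N)$, whereas the paper integrates the divergence form near $0$ to force any would-be extremum into the interior; both rest on the same regularity of the classical solution.
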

\begin{proof}[Proof of Lemma~\ref{lem1}]
We first assume $\lambda_{\eps}\geq0$. Suppose $U(0)<0$. Then there exists $\delta>0$ such that $U<0$ on $[0,\delta)$. Along with \eqref{equ2}, one may employ \eqref{as-f} to obtain $\eps^2(x^{N-1}U'(x))'=x^{N-1}f(U(x))<0$ on $(0,\delta)$. In particular, $U'<0$ on $(0,\delta)$. As a consequence, $U$ arrives at its minimum value at an interior point $x_0\in(0,1)$, and by \eqref{equ2} we get $f(U(x_0))\geq0$. This leads a contradiction since $U(x_0)\geq0>U(0)$. Hence, there holds $U(0)\geq0$. Applying the maximum principle to \eqref{equ2}, one arrives at $0\leq{U(0)}\leq{U(x)}\leq{U(1)=\lambda_{\eps}}$. Thus,
\begin{align}\label{ch-1st}
  \eps^2(x^{N-1}U'(x))'=x^{N-1}f(U(x))\geq0\quad\text{on}\,\,  (0,1),
\end{align}
and we further obtain $U'\geq0$ on $[0,1]$.

Similarly, for the case $\lambda_{\eps}<0$ we have  $\lambda_{\eps}={U(1)}\leq{U(x)}\leq{U(0)\leq0}$ and $U'\leq0$ on $[0,1]$. This completes the proof of \eqref{maxin}.
\end{proof}
When $\lambda_{\eps}=0$, \eqref{maxin} implies that equation \eqref{equ2} with the boundary condition ~$(U'(0),U(1))=(0,\lambda_{\eps})$ only has a trivial solution $U\equiv0$, together with \eqref{newU} we obtain $g(0)=0$. This is impossible  due to \eqref{as-g}.
In what follows, without loss of generality, {\bf it suffices to consider the case $\boldsymbol{\lambda_{\eps}>0}$}. Hence, we have $U\geq0$, $f(U)\geq0$ and $U'\geq0$ on $[0,1]$. Moreover, \eqref{ch-1st} holds, and we have the following estimates.
\begin{lemma}\label{lem2}
Under the same assumptions as in Lemma~\ref{lem1}, we assume $\lambda_{\eps}>0$. Then, there exists a positive constant $M$ independent of $\eps$ such that as $\eps\in(0,\frac{M}{\sqrt{2}(N-1)})$, we have
\begin{equation}\label{0ineq0}
    0\leq{U(x)}\leq2\lambda_{\eps}\exp\left(-\frac{M}{\eps}(1-x)\right),\quad\,x\in[0,1],
\end{equation}
and that: 
\begin{itemize}
    \item[(i)] If $\limsup\limits_{\eps\downarrow0}\lambda_{\eps}<\infty$, then
    \begin{align}\label{obds-1}
    \lim_{\eps\downarrow0}\left(\frac{g^2(\lambda_{\eps})}{2}-F(\lambda_{\eps})\right)=0.
    \end{align}
        \item[(ii)] If $\lambda_{\eps}\xrightarrow{\eps\downarrow0}\infty$, then
       \begin{equation}\label{obds-2}
    \lim_{\eps\downarrow0}\frac{g^2(\lambda_{\eps})}{2F(\lambda_{\eps})}=1.
\end{equation}
\end{itemize}
\end{lemma}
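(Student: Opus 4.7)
The proof rests on two ingredients: a barrier/comparison argument for the exponential decay \eqref{0ineq0}, and an energy identity combined with the Ambrosetti--Rabinowitz condition to control the limits in (i) and (ii).

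For \eqref{0ineq0}, I would use $\liminf_{t\to 0^+}f(t)/t>0$ to pick constants $M>0$ and $\delta>0$ with $f(t)\ge 2M^2 t$ for $t\in[0,\delta]$, and introduce the barrier $V(x):=2\lambda_\eps\exp(-M(1-x)/\eps)$, for which a direct computation gives
\[
\eps^2\Big(V''(x)+\tfrac{N-1}{x}V'(x)\Big)=M^2 V(x)+\tfrac{\eps M(N-1)}{x}V(x).
\]
A contradiction argument at any negative minimum of $V-U$ then shows $V\ge U$ wherever $V\le\delta$ and $x\ge\eps(N-1)/M$, using $f(U)>f(V)\ge 2M^2 V$ at the offending point. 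On the complementary boundary-layer region $\{x\ge 1-\eps\log 2/M\}$ (where $V\ge\lambda_\eps$) the estimate $U\le\lambda_\eps\le V$ is immediate, and piecing these regions together (for $\eps$ small enough that $V\le\delta$ outside the boundary layer) yields \eqref{0ineq0}.

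Multiplying \eqref{equ2} by $U'$ and integrating over $(0,1)$ with the boundary data $U'(0)=0$ and $\eps U'(1)=g(\lambda_\eps)$ produces the key identity
\begin{align*}
F(\lambda_\eps)-\tfrac12 g^2(\lambda_\eps)=F(U(0))+(N-1)\int_0^1\frac{\eps^2(U'(x))^2}{x}\dx,
\end{align*}
which in particular already yields $g^2(\lambda_\eps)\le 2F(\lambda_\eps)$ (the ``easy direction'' of both (i) and (ii)). Monotonicity of the energy $E(x):=F(U(x))-\tfrac12\eps^2(U'(x))^2$ together with $E(0)\ge 0$ gives the pointwise bound $\eps^2(U'(x))^2\le 2F(U(x))$, which combined with \eqref{0ineq0} lets me control the integrand by $2F(2\lambda_\eps e^{-M(1-x)/\eps})$. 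For part (i) with $\lambda_\eps\le L$, continuity of $F$ yields $F(U(0))\to 0$; changing variables $y=(1-x)/\eps$ recasts the integral as $O(\eps)\int_0^{1/\eps}F(2Le^{-My})\dy$, and since $\int_0^\infty F(2Le^{-My})\dy<\infty$ this is $O(\eps)\to 0$, proving \eqref{obds-1}. For part (ii), I would divide the identity through by $F(\lambda_\eps)$ and exploit \eqref{f-F}: this condition implies $F(t)/t^{\theta_0}$ is non-decreasing for $t\ge T_0$, hence $F(w)\le F(\lambda_\eps)(w/\lambda_\eps)^{\theta_0}$ for $T_0\le w\le\lambda_\eps$. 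Applied with $w=U(0)\le 2\lambda_\eps e^{-M/\eps}$ this gives $F(U(0))/F(\lambda_\eps)\le (2e^{-M/\eps})^{\theta_0}\to 0$, and the same growth control inside the integral shows the integral divided by $F(\lambda_\eps)$ also vanishes, delivering \eqref{obds-2}.

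The principal obstacle is the barrier step: the singular $\tfrac{N-1}{x}$ near $x=0$ and the possibility that $V>\delta$ outside the boundary layer (when $\lambda_\eps\gg\delta$) force a careful region-splitting, and the minimum-principle contradiction has to be rerun in each subregion to glue the estimates together. A secondary difficulty in (ii) is that the rate at which $\lambda_\eps\to\infty$ is not a priori controlled, so the Ambrosetti--Rabinowitz bounds must be applied uniformly in the unknown scale of $\lambda_\eps$.
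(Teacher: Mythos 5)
Your overall architecture matches the paper's --- an exponential decay estimate for $U$, followed by a Pohozaev/energy identity evaluated at $x=1$ and controlled via the Ambrosetti--Rabinowitz condition --- but two of your steps have genuine gaps. First, the barrier argument for \eqref{0ineq0}: your comparison only works where $f(V)\ge 2M^2V$, which you have secured only for $V\le\delta$, while $V$ ranges up to $2\lambda_\eps$, unbounded in case (ii). On the intermediate region where $\delta<V<\lambda_\eps$ you have neither the supersolution inequality nor the trivial bound $U\le\lambda_\eps\le V$, so the maximum of $U-V$ can sit on the boundary of your good region and the contradiction does not close; likewise, at the left endpoint $x=\eps(N-1)/M$ your single exponential $V$ is exponentially small and nothing forces $V\ge U$ there, nor do you say how the bound is transported to $[0,\eps(N-1)/M]$ (one needs the monotonicity $U'\ge0$ from Lemma~\ref{lem1}). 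The paper avoids both problems by working with the differential inequality for $U^2$ (which absorbs the first-order term), invoking a global bound $tf(t)\ge M^2t^2$ deduced from \eqref{as-f}--\eqref{f-F}, and using a two-sided barrier $\lambda_\eps\bigl(e^{-M(x-a)/\eps}+e^{-M(1-x)/\eps}\bigr)$ that dominates $U$ at \emph{both} endpoints of the comparison interval. To salvage your version you would need $f(t)\ge ct$ for all $t>0$, not just near $0$, and a barrier with controlled boundary values.

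Second, the energy identity. Yours is correct, but the multiplier $U'$ produces the term $(N-1)\int_0^1\eps^2(U'(x))^2x^{-1}\dx$, and your proposed bound of the integrand by $2F\bigl(2\lambda_\eps e^{-M(1-x)/\eps}\bigr)$ silently discards the factor $x^{-1}$: with it reinstated, the majorant $\int_0^1 x^{-1}F\bigl(2\lambda_\eps e^{-M(1-x)/\eps}\bigr)\dx$ diverges at $x=0$, since the integrand tends to $+\infty$ there. This is repairable (integrating \eqref{ch-1st} gives $\eps^2U'(x)\le \tfrac{x}{N}f(U(x))$, which kills the $x^{-1}$ near the origin), but as written the step fails. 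A smaller issue in (ii): on the thin layer where $2\lambda_\eps e^{-M(1-x)/\eps}>\lambda_\eps$ your bound $F(w)\le F(\lambda_\eps)(w/\lambda_\eps)^{\theta_0}$ does not apply, and \eqref{f-F} gives no \emph{upper} bound on $F(2\lambda_\eps)/F(\lambda_\eps)$; you must instead use $U\le\lambda_\eps$, hence $F(U)\le F(\lambda_\eps)$, there. The paper's multiplier $x^{2N-2}U'$ yields the single term $(2N-2)\int_0^1x^{2N-3}F(U)\dx$, whose weight vanishes at the origin since $N\ge2$, and the bound $F(U)\le\frac{F(\lambda_\eps)}{\lambda_\eps}U$ (from monotonicity of $F(t)/t$) circumvents both difficulties at once; adopting that multiplier would simplify your parts (i)--(ii) considerably.
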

\begin{proof}[Proof of Lemma~\ref{lem2}]
Multiplying \eqref{equ2} by $U$ and using \eqref{as-f} and \eqref{f-F}, one may check that
\begin{align*}
    \frac{\eps^2}{2}(U^2(x))''=&\,\eps^2\left((U'(x))^2-\frac{N-1}{x}U(x)U'(x)\right)+U(x)f(U(x))\\
    \geq&\,\left(M^2-\eps^2\frac{(N-1)^2}{4x^2}\right)U^2(x).
\end{align*}
Here we have used \eqref{as-f} and \eqref{f-F} to verify a positive constant $M$ independent of $\eps$ such that $tf(t)\geq{M}t^2$ for all $t\in\mathbb{R}$. As a consequence, for $0<\eps^{\star}<\frac{\sqrt{2}M}{N-1}$, we have
\begin{align*}
    \eps^2(U^2(x))''
    \geq\,M^2U^2(x),\quad\,x\in[\textstyle\frac{N-1}{\sqrt{2}M}\eps^{\star},1)\,\,\text{and}\,\,\eps\in(0,\eps^{\star}).
\end{align*}
Along with \eqref{maxin} for $\lambda_{\eps}>0$, we follow the comparison theorem to obtain
\begin{align}\label{2ineq1}
    0\leq{U(x)}\leq\lambda_{\eps}\left(\exp\left(-\frac{M(x-{\frac{N-1}{\sqrt{2}M}\eps^{\star}})}{\eps}\right)+\exp\left(-\frac{M(1-x)}{\eps}\right)\right),
\end{align}
for $x\in[\textstyle\frac{N-1}{\sqrt{2}M}\eps^{\star},1)$ and $\eps\in(0,\eps^{\star})$. In particular, for $x\in[0,\frac12(\frac{N-1}{\sqrt{2}M}\eps^{\star}+1)]$ with $\eps^{\star}=\frac{M}{\sqrt{2}(N-1)}$, \eqref{maxin} and \eqref{2ineq1} imply
\begin{equation}\label{ineq2u}
\begin{aligned}
    0\leq U(x)\leq&\, U({\textstyle\frac12(\frac{N-1}{\sqrt{2}M}\eps^{\star}+1)})\\
    \leq&\,2\lambda_{\eps}\exp\left(-\frac{M}{2\eps}(1-\frac{N-1}{\sqrt{2}M}\eps^{\star})\right)=2\lambda_{\eps}\exp\left(-\frac{M}{4\eps}\right)\\
    \leq&\,2\lambda_{\eps}\exp\left(-\frac{M}{4\eps}(1-x)\right),\quad\textstyle\text{for}\,\,x\in[0,\frac34]\,\,\text{and}\,\,\eps\in(0,\frac{M}{\sqrt{2}(N-1)}).
\end{aligned}    
\end{equation}
On the other hand, by \eqref{2ineq1} with $\eps^{\star}=\frac{M}{\sqrt{2}(N-1)}$, we have
\begin{equation}\label{2ineq3}
  \begin{aligned}
    0\leq{U(x)}\leq&\,\lambda_{\eps}\left(\exp\left(-\frac{M(x-\frac12)}{\eps}\right)+\exp\left(-\frac{M(1-x)}{\eps}\right)\right)\\
    \leq&\,2\lambda_{\eps}\exp\left(-\frac{M}{\eps}(1-x)\right),\quad\textstyle\text{for}\,\,x\in[\frac34,1]\,\,\text{and}\,\,\eps\in(0,\frac{M}{\sqrt{2}(N-1)}).
\end{aligned}  
\end{equation}
Therefore, \eqref{0ineq0} follows from \eqref{ineq2u} and \eqref{2ineq3}.

It remains to prove \eqref{obds-1}--\eqref{obds-2}. Multiplying \eqref{ch-1st} by $x^{N-1}U'(x)$, one may check via simple calculations that
\begin{align}\label{diu}
    \left(\frac{\eps^2}{2}x^{2N-2}U'^2(x)-x^{2N-2}F(U(x))\right)'=-(2N-2)x^{2N-3}F(U(x))
\end{align}
 Integrating \eqref{diu} over the interval~$(0,1)$ and using \eqref{newU}, we have
\begin{align}\label{asc}
  \frac{g^2(\lambda_{\eps})}{2}-F(\lambda_{\eps}) =-(2N-2)\int_0^1 x^{2N-3}F(U(x))\text{d}x
\end{align}
and the following two cases for the estimate of $\int_0^1 x^{2N-3}F(U(x))\text{d}x$:\\
{\bf Case~1.} When $\limsup\limits_{\eps\downarrow0}\lambda_{\eps}<\infty$, we assume $0<\lambda_{\eps}\leq{L^*}$ as $0<\eps\ll1$, where $L^*>0$ is independent of $\eps$. Then, by \eqref{as-f} and \eqref{0ineq0},
\begin{align}\label{ch-ca1}
0\leq\int_0^1 x^{2N-3}F(U(x))\text{d}x\leq\,f(L^*)\int_0^1 U(x)\text{d}x
\leq\frac{2L^*f(L^*)}{M}\eps\xrightarrow{\eps\downarrow0}0.
\end{align}
{\bf Case~2.} When $\lambda_{\eps}\xrightarrow{\eps\downarrow0}\infty$,  we notice that, by \eqref{bigF} and \eqref{f-F}, $(\frac{F(t)}{t})'=\frac{tf(t)-F(t)}{t^2}\geq\frac{(\theta_0-1)F(t)}{t^2}>0$ for $t\gg1$. Hence,  \eqref{maxin} gives $\sup\limits_{[0,1]}\frac{F(U)}{U}=\frac{F(U(1))}{U(1)}=\frac{F(\lambda_{\eps})}{\lambda_{\eps}}$, and
\begin{equation}\label{ch-ca2}
  \begin{aligned}
0\leq\int_0^1 x^{2N-3}F(U(x))\text{d}x\leq&\,\left(\sup_{[0,1]}\frac{F(U)}{U}\right)\int_0^1 U(x)\text{d}x  \\
(\text{by\,\,\eqref{0ineq0}})\,\,\leq&\,\frac{F(\lambda_{\eps})}{\lambda_{\eps}}\times\frac{2\lambda_{\eps}\eps}{M}=\frac{2F(\lambda_{\eps})}{M}\eps.
\end{aligned}  
\end{equation}
As a consequence, by \eqref{asc} and \eqref{ch-ca1}, we prove \eqref{obds-1}; by \eqref{asc} and \eqref{ch-ca2}, we prove \eqref{obds-2}. Thus, the proof of Lemma~\ref{lem2} is completed.
\end{proof}

Having Lemma \ref{lem2} in hands, we state the proof of Theorem~\ref{m-thm} as follows.
\begin{proof}[Proof of Theorem~\ref{m-thm}]
Suppose on the contrary that there exists a strictly increasing  sequence $R_i\xrightarrow{i\to\infty}\infty$ such that for each equation~\eqref{equ}--\eqref{bdu} corresponding to $R=R_i$ has a classical solution $u_i$. Then by \eqref{ch-v} we set $\eps_i=\frac{1}{R_i}\xrightarrow{i\to\infty}0$ and  $\lambda_{\eps_i}=U_i(1)=u_i(R_i)$. Note that the sequence $\{\lambda_{\eps_i}\}_{i\in\mathbb{N}}$ contains infinitely many members of non-negative numbers
 or non-positive numbers. Hence, without loss of generality, we may assume  $\lambda_{\eps_i}>0$, $\forall\,i\in\mathbb{N}$. (As mentioned previously, if $\lambda_{\eps_i}=0$, then $U_i\equiv0$ on $(0,1)$ and $g(0)=0$ which is impossible!)

We now consider two situations for $\{\lambda_{\eps_i}\}_{i\in\mathbb{N}}$. If $\limsup\limits_{i\to\infty}\lambda_{\eps_i}=\lambda^*<\infty$, then there exists a subsequence $\{\lambda_{\eps_{n_i}}\}$ such that $\lim\limits_{n_i\to\infty}\lambda_{\eps_{n_i}}=\lambda^*$. Since both $g$ and $F$ are continuous on $\mathbb{R}$, by Lemma~\ref{lem2}(i) we obtain $g^2(\lambda^*)=2F(\lambda^*)$ which contradicts to \eqref{as-g}. On the other hand, if $\lim\limits_{i\to\infty}\lambda_{\eps_i}=\infty$, then by Lemma~\ref{lem2}(ii) we have $\lim\limits_{i\to\infty}\frac{g^2(\lambda_{\eps_i})}{2F(\lambda_{\eps_i})}=1$  which still contradicts to \eqref{as-g}. Therefore,  there exists $R^*=R^*(f,g)>0$ depending on $f$ and $g$ such that when $R>R^*$, equation~\eqref{equ}--\eqref{bdu} has no solution. We thus complete the proof of Theorem~\ref{m-thm}. 
\end{proof}

\subsection*{Acknowledgement}
  This work was partially supported by the MOST grant 110-2115-M-007 -003 -MY2 of Taiwan.

\end{document}